\newtheorem{Thm}{Theorem}
\newtheorem{thm}{Theorem}[section]
\newtheorem{lem}[thm]{Lemma}
\newtheorem{prop}[thm]{Proposition}
\newtheorem{defi}[thm]{Definition}
\newtheorem{rem}[thm]{Remark}
\providecommand{\customgenericname}{}
\newcommand{\newcustomtheorem}[2]{%
  \newenvironment{#1}[1]
  {%
   \renewcommand\customgenericname{#2}%
   \renewcommand\theinnercustomgeneric{##1}%
   \innercustomgeneric
  }
  {\endinnercustomgeneric}
}
\newenvironment{proofof}[1]{\proof}{\endproof}
\DeclareMathAlphabet\mathbb{U}{fplmbb}{m}{n}
\newcommand{\RR}{\mathbb{R}}     
\newcommand{\ZZ}{\mathbb{Z}}     
\newcommand{\NN}{\mathbb{N}}
\newcommand{\defeq}{\mathrel{\mathop{\raisebox{1.1pt}{\scriptsize$:$}}}=}
\newcommand\opna{\operatorname}
\newcommand\mf{\mathfrak}
\newcommand\mc{\mathcal}
\newcommand\Lip{\operatorname{Lip}}
\newcommand\setI{\opna{\textup{\textbf{I}}}}
\newcommand\Mass{\opna{\textup{\textbf{M}}}}
\newcommand\FillVol{\opna{\textup{FillVol}}}
\begin{document}

\title{\Large Non-Euclidean isoperimetric inequalities\\ for nilpotent Lie groups}
\date{}
\author{\large Moritz Gruber \footnote{The author is supported by the German Research Foundation (DFG) grant GR 5203/1-1}}
\maketitle

\vspace*{-10mm}

\begin{abstract}\noindent\small
\textbf{Abstract.} This article treats isoperimetric inequalities for integral currents in the setting of stratified nilpotent Lie groups equipped with left-invariant Riemannian metrics. We prove that for each such group there is a dimension in which no Euclidean isoperimetric inequality is admitted, while in all smaller dimensions strictly Euclidean isoperimetric inequalities are satisfied.
\end{abstract}

%
%
\pagenumbering{arabic}

\section{Introduction}\label{S1}

Isoperimetric inequalities have been and are object of geometric research from the very beginning of mathematics some thousand years ago until today (and hopefully will be so in the future). Starting with \emph{Dido's Problem} of enclosing an as large as possible area of the plane by a rope of a given length, the question has extended to more general settings. We now ask  in a metric space $(X,d_X)$ for the maximal needed $(k+1)$-dimensional volume to fill a $k$-dimensional boundary of given surface measure. 

For the case of a simply connected (stratified) nilpotent Lie group $G$ with a left-invariant Riemannian length metric $d_R$, Gromov has drawn a conjectural picture of the isoperimetric behaviour (see \cite{GGT}). Up to a dimension $k_o$ the space $(G,d_R)$ should admit strictly Euclidean isoperimetric inequalities, i.e. the maximal needed volume of a $(k+1)$-dimensional filling, $k < k_o$, grows polynomial of degree $\nicefrac{(k+1)}{k}$ in the surface measure of its boundary. Then in dimension $k_o$ a non-Euclidean isoperimetric inequality is expected, which by Gromov's heuristic arguments (mainly based on observations on the Heisenberg groups) has a polynomial growth of degree $\nicefrac{(k_o+2)}{k_o}$ as lower bound. Since then there has been a steady progress, but most works either focus on the Euclidean upper bounds in low dimensions  (e.g. \cite{Pittet95},\cite{Young1},\cite{Gruber1}) or on groups of Heisenberg type (e.g. \cite{Burillo},\cite{Pittet},\cite{YoungII},\cite{Gruber2}). Of a different spirit is the considerable result of Wenger in \cite{Wenger11} which necessitates some adjustment on the conjecture. It gives examples of stratified nilpotent Lie groups with a super-quadratic lower bound for the isoperimetric inequality in dimension 1 with at the same time an upper bound proportional to $\ell^2\log(\ell)$. This means these groups admit in dimension $k_o=1$ no Euclidean isoperimetric inequality, but the upper bound is much smaller than predicted by Gromov. The there used integral $m$-currents are generalisations of differentiable (sub-) manifolds. The solution of \emph{Plateau's Problem} by Federer and Fleming in \cite{FF60} legitimated them as the right objects to examine isoperimetric problems.

In the present article we prove the conjecture discussed above. 
The main result is Theorem \ref{Thm}, which states that every stratified nilpotent Lie group with a left-invariant Riemannian metric satisfies strictly \mbox{Euclidean} isoperimetric inequalities up to a dimension $k_o$ in which it does not admit an Euclidean isoperimetric inequality anymore.

\begin{Thm}\label{Thm}
For every stratified nilpotent Lie group $G$ of step $d\ge 2$, equipped with a left-invariant Riemannian length metric $d_R$, there is a dimension $k_o \in \NN$ such that $(G,d_R)$ does not admit an Euclidean isoperimetric inequality for $\setI_{k_o}(G,d_R)$ and satisfies strictly Euclidean isoperimetric inequalities for $\setI_k(G,d_R)$ for all $k <k_o$.
\end{Thm}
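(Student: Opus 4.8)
The plan is to identify the critical dimension $k_o$ directly from the algebraic structure of the Lie algebra $\mf g$ of $G$. Since $G$ is stratified of step $d$, write $\mf g = V_1 \oplus \cdots \oplus V_d$ with $[V_1,V_i] = V_{i+1}$. The key invariant governing isoperimetry is the pair of \emph{scaling weights}: the Carnot dilation $\delta_t$ acts on $V_i$ by $t^i$, so the homogeneous dimension is $Q = \sum_i i\cdot\dim V_i$, while a $k$-dimensional submanifold tangent to a horizontal-type plane scales differently from its boundary. Concretely, I would look for the smallest $k$ for which there exists a $k$-cycle that can be filled only by a chain whose mass scales with a weight strictly larger than $\tfrac{k+1}{k}$ times the weight of the cycle.

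First, for the \emph{upper bounds} in dimensions $k < k_o$: I would show that below the critical dimension, cycles can be filled "horizontally" — using a cone-type or Federer–Fleming-type construction adapted to the grading — so that the filling lives in a region where the Riemannian and the scaled metrics are comparable up to bounded distortion, yielding the Euclidean exponent $\tfrac{k+1}{k}$. This is essentially a combination of the known Euclidean upper-bound results (the Pittet/Young/Gruber circle of ideas cited in the introduction) with the deformation theorem for integral currents; the point is that for small $k$ the obstruction to horizontality has not yet appeared, so one genuinely gets the \emph{strict} Euclidean inequality $\FillVol(T) \le C\, \Mass(T)^{(k+1)/k}$.

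Second, for the \emph{lower bound} in dimension $k_o$ — which I expect to be the main obstacle — I would construct an explicit family of $k_o$-cycles $T_r$ (scaled copies of a fixed cycle, or images under $\delta_r$ of a model cycle built from the stratification) and prove that any filling must have mass bounded below by a super-Euclidean power of $\Mass(T_r)$. The mechanism: choose the cycle so that it is homologically trivial but its filling is forced to "spend" volume in the higher strata $V_2,\dots,V_d$, where the metric is contracted under $\delta_r$; a filling current of controlled mass in the scaled picture blows up in the Riemannian picture. To make this rigorous I would use a suitable closed differential form $\omega$ (a left-invariant form detecting the relevant stratum, in the spirit of the calibration/Stokes argument) with $\int_{T_r}\omega$ large while $\|\omega\|$ is controlled on the region available to fillings, so that $\Mass(S) \ge \int_S d\omega^{-1}\text{-type bound}$ forces $\Mass(S) \gtrsim \Mass(T_r)^{(k_o+1)/k_o + \varepsilon}$ for some $\varepsilon>0$ determined by the jump in scaling weights between consecutive strata. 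The delicate points are: (i) verifying such a cycle $T_r$ exists and is a boundary (this is where step $d\ge 2$ is used — a genuinely nonabelian bracket is needed), and (ii) controlling the norm of $\omega$ pulled back to arbitrary integral fillings, not just smooth ones, which requires the machinery of integral currents in metric spaces and the compactness/closure theorems of Ambrosio–Kirchheim. Combining the two halves and setting $k_o$ to be exactly the threshold where the lower-bound construction first succeeds gives the theorem.
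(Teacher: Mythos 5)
Your plan is a genuinely different route from the paper's, and it contains gaps that are not just matters of detail. The paper never proves Euclidean upper bounds and never constructs an explicit hard-to-fill family of cycles. Instead it defines $k_o$ as the \emph{first} dimension in which the strictly Euclidean inequality fails; such a dimension exists because Lemma \ref{LemApprox} shows that Euclidean inequalities up to dimension $m$ would force a nontrivial closed horizontal $m$-current with positive filling volume, which is impossible once $m$ exceeds $\dim V_1$. Proposition \ref{prop} then shows, by pushing a horizontal cycle forward under the dilations $s_t$ (horizontal mass scales exactly like $t^m$, while $(m+1)$-dimensional fillings contract at least like $t^{-(m+1)}$ under $s_{t^{-1}}$), that any sub-Euclidean rank $\alpha<\frac{m+1}{m}$ at this first failure dimension would make $\FillVol(S)$ arbitrarily small, a contradiction; hence the failure must be a failure of the Euclidean upper bound itself. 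With this definition of $k_o$ the ``strictly Euclidean for $k<k_o$'' clause is automatic, so no upper-bound construction is ever needed.

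Your proposal, by contrast, requires both halves of Gromov's program in full generality, and neither is available. First, the Euclidean upper bounds for all $k<k_o$ in an arbitrary stratified group are not ``known results'' to be combined with a deformation theorem; the cited works of Pittet, Young and Gruber cover only low dimensions or special classes, and proving these bounds in general is open. Second, and more decisively, your lower-bound mechanism is provably too strong: you aim for $\Mass(S)\gtrsim \Mass(T_r)^{(k_o+1)/k_o+\varepsilon}$ with $\varepsilon>0$ coming from the jump in scaling weights, but Wenger's examples \cite{Wenger11} (quoted in the introduction) are stratified groups with $k_o=1$ whose filling function is bounded above by $C\ell^2\log\ell$, so no such polynomial gap $\varepsilon>0$ can exist for them. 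A calibration argument of the kind you describe would therefore never ``succeed'' in dimension $1$ for those groups, even though the Euclidean inequality genuinely fails there; consequently the threshold ``where the lower-bound construction first succeeds'' does not coincide with the $k_o$ of the theorem. Finally, your purely algebraic identification of $k_o$ via scaling weights is not justified; the correct characterisation (see the concluding remark of the paper) is geometric: $k_o$ is the smallest $k$ for which some $k$-dimensional horizontal boundary admits no horizontal filling.
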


We'd like to draw the reader's attention to the guaranteed non-Euclidean isoperimetric inequality in dimension $k_o$. By Wenger's result \cite{Wenger11} this presumably is the best possible general statement. 
To prove Theorem \ref{Thm}, we show that Euclidean isoperimetric inequalities imply the existence of closed horizontal boundaries.

\begin{RefLem}{\ref{LemApprox}}
Let $(G,d_R)$ be a stratified nilpotent Lie group and let $m\in \NN$, $m< \dim(G)$, be such that $(G,d_R)$  admits for all $k\in \{1,...,m\}$  an Euclidean isoperimetric inequality for $\setI_k(G,d_R)$.
Then there is a closed horizontal current $S \in \setI_m(G,d_R)$ with $\FillVol(S)>0$.
\end{RefLem}

Using this, we will prove that the first not strictly Euclidean isoperimetric inequality cannot have an Euclidean upper bound.

\begin{RefProp}{\ref{prop}}
Let $(G,d_R)$ be a stratified nilpotent Lie group and let $m\in \NN$ be the lowest dimension in which $(G,d_R)$ does not satisfy a strictly Euclidean isoperimetric inequality for integral currents. Then $(G,d_R)$ does not admit an Euclidean isoperimetric inequality for $\setI_{m}(G,d_R)$.
\end{RefProp}

Once we have these results, Theorem \ref{Thm} follows directly:

\begin{proofof}{Theorem \ref{Thm}}
Let $V_1$ be the horizontal distribution of $G$. By Lemma \ref{LemApprox} we know $(G,d_R)$ cannot admit Euclidean isoperimetric inequalities for $\setI_k(G,d_R)$ for all $k \le \dim (V_1)+1$ as there are no non-trivial horizontal currents of dimension greater than $\dim (V_1)$.
So there is a dimension $m \in \{1,...,\dim (V_1)+1\}$ which is the lowest dimension in which $(G,d_R)$ does not satisfy a strictly Euclidean isoperimetric inequality for integral currents. By Proposition \ref{prop} $(G,d_R)$ does not admit an Euclidean isoperimetric inequality for $\setI_{m}(G,d_R)$  and Theorem \ref{Thm} follows with $k_o=m$.
\end{proofof}

Indeed, the maximal dimension of non-trivial horizontal currents is strictly smaller than the dimension of the horizontal distribution $V_1$ as for the set of horizontal currents holds $\setI_k(G,d_C)=\{0\}$ if there isn't a $k$-dimensional Abelian subalgebra of $\mf g$ contained in $V_1$ (see \cite{Magnani}).


\section{Definitions and notation}

\subsection{Nilpotent Lie groups}

\begin{defi}[Stratified nilpotent Lie group]
A nilpotent Lie group  $G$ of step $d$ with Lie algebra $\mf g$ is \emph{stratified} if it is simply connected and there is a grading
$\mf g = V_1 \oplus V_2 \oplus ... \oplus V_d$
with $[V_1, V_j]=V_{j+1}$.
\end{defi}

Throughout the whole paper $G$ will denote a stratified nilpotent Lie group of step $d\ge 2$ and with Lie algebra $\mf g= V_1 \oplus V_2 \oplus ... \oplus V_d$. This especially includes all simply connected nilpotent Lie groups of step $2$. The first layer $V_1$ is called the \emph{horizontal} distribution. We equip $G$ with a left-invariant Riemannian metric $\langle\cdot,\cdot\rangle$. As all such metrics are Lipschitz-equivalent and Lipschitz equivalences only affect the constants in our isoperimetric inequalities, we can assume $V_i\perp V_j$ for all $i \ne j$ with respect to this metric. We denote by $d_R$ the corresponding length metric and the \emph{Carnot-Carath\'eodory metric}, i.e. the corresponding length metric of almost everywhere $V_1$-tangent curves, we denote by 
$$d_C(x,y) \defeq \inf\{\opna{length}_R(\gamma_{xy}) \mid \gamma_{xy} \text{ curve from $x$ to $y$ with } \dot \gamma_{xy}(t) \in dL_{\gamma_{xy}(t)} V_1 \text{ a.e.}\} \quad \forall x,y \in G$$
where $L_g:G \to G, x \mapsto gx$ denotes the left-multiplication by $g\in G$. Such a group $G$ has a family of automorphisms $(s_t:G \to G)_{t>0}$ with $\opna{d}_e s_t: \mf g \to \mf g, (v_1,...,v_d) \mapsto (tv_1, t^2v_2,...,t^dv_d)$. The two metrics of interest have the following behaviour under these maps:
$$d_R(s_t(x),s_t(y)) \begin{cases} \le t \cdot d_R(x,y) \text{ if } t <1 \\  \ge t \cdot  d_R(x,y) \text{ if } t \ge1 \end{cases} \text{ and }\ d_C(s_t(x),s_t(y))=t \cdot  d_C(x,y) \quad \forall x,y \in G.$$
We will denote the Hausdorff distance with respect to these metrics by $d_{\mc H,d_R}$ and $d_{\mc H,d_C}$, respectively. Further we will denote for $U\subset G$ by $\dim_{\mc H,d_R}(U)$ and $\dim_{\mc H,d_C}(U)$ the Hausdorff dimensions corresponding to the metrics $d_R$ and $d_C$.
%
%

\subsection{Integral currents}

 We will give only a very brief definition of integral currents and the most important terms. For an exhaustive introduction see \cite{FF60} and \cite{AmbrosioKirchheim}.

Let $(X,d_X)$ be a complete metric space and $k \in \NN_0$. Let $\mc D^k(X)$ be the $\RR$-vector space of $(k+1)$-tuples $(f,\pi_1,...,\pi_k)$ of Lipschitz functions on $(X,d_X)$ with first element $f$ always bounded. A $k$-dimensional \emph{metric functional} on $(X,d_X)$ is a sub-additive and positive $1$-homogeneous function $T:\mc D^k(X) \to \RR$. The \emph{boundary} of a $k$-dimensional metric functional $T$ is the $(k-1)$-dimensional metric functional $\partial T$ defined by $\partial T(f,\pi_1,...,\pi_{k-1})=T(1,f,\pi_1,...,\pi_{k-1})$ for all $(f,\pi_1,...,\pi_{k-1}) \in \mc D^{k-1}(X)$. A $k$-dimensional metric functional $T$ is called of \emph{finite mass} if there is a finite Borel measure $\mu$ on $X$ such that
$$|T(f,\pi_1,...,\pi_k)| \le \prod_{i=1}^k \Lip(\pi_i) \int_X |f|d\mu \qquad \forall (f,\pi_1,...,\pi_k) \in \mc D^k(X).$$
Let $T$ be a metric functional of finite mass and let $\mu_T$ be the minimal finite measure as above, then $\Mass(T) \defeq \mu_T(X)$ is called the \emph{mass} of $T$ and the \emph{support} of $T$ is the closed set $\opna{supp}(T) \defeq \{x \in X \mid \mbox{$\mu_T(B_r(x))>0 \ \forall r>0\}$}$.

In the special case $(X,d_X)=(\RR^n,d_{\text{\footnotesize{Eucl}}})$ every function $\theta \in L^1(B,\RR)$ with $B \subset \RR^n$ a Borel set, induces an $n$-dimensional metric functional $[\![\theta]\!](f,\pi_1,...,\pi_n) \defeq  \int_B \theta f \det\left((\frac{\partial\pi_i}{\partial x_j})_{i,j}\right) d\lambda^n$, where $\lambda^n$ denotes the \hbox{Lebesgue} measure on $\RR^n$.

Two important constructions to produce new metric functionals from existing ones are the \emph{push-forward} and the \emph{restriction}. Let $(Y,d_Y)$ be another complete metric space and $T$ be $k$-dimensional metric functional on $(X,d_X)$. For a Lipschitz map $\varphi: (X,d_X) \to (Y,d_Y)$ the push-forward $\varphi_\#T$ is defined by $\varphi_\#T(g,\sigma_1,...,\sigma_k) = T(g\circ\varphi,\sigma_1\circ\varphi,...,\sigma_k\circ\varphi)$ for all $(g,\sigma_1,...,\sigma_k) \in \mc D^k(Y)$ and is a $k$-dimensional metric functional on $(Y,d_Y)$. For a Borel set $B\subset X$ the restriction of $T$ to $B$ is the $k$-dimensional metric functional $T_{|B}$ on $(X,d_X)$ defined by $T_{|B}(f,\pi_1,...,\pi_k)=T(f\cdot\mathbb{1}_B, \pi_1,...,\pi_k)$ for all $(f,\pi_1,...,\pi_k)\in \mc D^k(X)$.

In the following definition we denote by $\mc H^k$ the $k$-dimensional Hausdorff measure of $(X,d_X)$.

\begin{defi}[Integral current]
An \emph{integral $k$-current} on $(X,d_X)$ is a $k$-dimensional metric functional $T$ such that
\begin{enumerate}[(i)]
	\item
	$T$ is multi-linear,
	
	\item
	$T$ is continuous, i.e. if $\forall i \in \{1,...,k\}$ the sequences $(\pi_i^j)_j$ converge pointwise to $\pi_i$ for $j\to
	\infty$ and if $\sup_{i,j}(\Lip(\pi_i^j)) < \infty$ then $T(f,\pi_1^j,...,\pi_k^j) \to T(f,\pi_1,...,\pi_k) $ for $j \to
	\infty$,
	
	\item
	$T$ is local, i.e. if there are Borel sets $B_i \subset X$, $i \in \{1,...,k\}$, such that $\pi_i$ is constant on $B_i$ 
	and $\{x\in X \mid f(x) \ne 0\} \subset \bigcup_{i=1}^k B_i$, then $T(f,\pi_1,...,\pi_k)=0$,
	
	\item
	$T$ is of finite mass,
	
	\item
	$T$ is normal, i.e. $T$ and its boundary $\partial T$ satisfy the conditions $(i) - (iv)$,
	
	\item
	$T$ is integer rectifiable, i.e. $\mu_T$ is concentrated on a countably $\mc H^k$-rectifiable subset and vanishes on 
	$\mc H^k$-negligible Borel sets and for every Lipschitz map $\varphi: X \to \RR^k$ and any open subset 
	$U \subset X$ there is a function $\theta \in L^1(\RR^k,\ZZ)$ with $\varphi_\#(T_{|U})=[\![\theta]\!]$.
\end{enumerate}
\end{defi}

For $k \in \NN_0$ we denote by $\setI_k(X,d_X)$ the set of integral $k$-currents on $(X,d_X)$. It is important to mention that the push-forward, the restriction and the boundary of an integral current are again integral currents.
Further, we call an integral current $T$ \emph{closed} if it has zero-boundary $\partial T=0$.

Integral currents can be  considered as a generalisation of Lipschitz chains, which themselves are generalisations of smooth submanifolds. Indeed, every Lipschitz $k$-chain $a= \sum_i z_i \alpha_i$ with coefficients $z_i \in \ZZ$ and Lipschitz maps $\alpha_i : \Delta^k \to X$, induces an integral $k$-current $a_\# \defeq \sum_i z_i {\alpha_i}_\#([\![\mathbb{1}_{\Delta^k}]\!]) \in \setI_k(X,d_X)$.

Later we will talk about convergence of integral currents. For this we use the \emph{flat norm} of an integral $k$-current $T\in \setI_k(X,d_X)$ which is defined as 
$$\|T\|_{\text{\footnotesize{flat}}}\defeq \inf\{\Mass(R)+\Mass(S) \mid R\in \setI_k(X,d_X), S \in \setI_{k+1}(X,d_X) \text{ s.t. } R+\partial S=T\}.$$

In the case of a stratified nilpotent Lie group there is a special subclass of integral currents:

\begin{defi}[Horizontal current]
Let $G$ be a stratified nilpotent Lie group and $k\in \NN$. We call \mbox{$S\in \setI_k(G,d_R)$} \emph{horizontal} if $\opna{supp}(S)$ is almost everywhere tangent to the horizontal distribution $V_1$, i.e. \mbox{$S \in \setI_k(G,d_C)$}.
\end{defi}

For a horizontal $k$-current $S$ the mass $\Mass(S)$ is the same whatever one considers $S$ as an element of $\setI_k(G,d_R)$ or of $\setI_k(G,d_C)$.

%
%
\subsection{Isoperimetric inequalities}

We will like to refer to the optimal mass of a filling of a closed integral current $S \in \setI_k(X,d_X)$. For this we define the \emph{filling volume} of $S$ as $\FillVol(S)\defeq \inf\{\Mass(T) \mid T \in \setI_{k+1}(X,d_X),\ \partial T=S\}$.

\begin{defi}[Isoperimetric inequalities]
Let $k \in \NN$ and $\alpha>0$. 
	\begin{enumerate}[a)]
	\item
	$(X,d_X)$ \emph{admits an isoperimetric inequality of rank $\alpha$ for $\opna{\textbf{I}}_k(X,d_X)$} if there is
	a constant $C>0$ such that for every closed integral current $S \in \setI_k(X,d_X)$ 
	$$\FillVol(S) \le \begin{cases} C \cdot \Mass(S)^{\frac{k+1}{k}} \text{ for }  \Mass(S) <1,\\ C \cdot 
	\Mass(S)^\alpha \hspace{3.5mm}\text{ for }  \Mass(S)  \ge 1. \end{cases}$$
	
	\item
	$(X,d_X)$ \emph{admits an Euclidean  isoperimetric inequality for $\opna{\textbf{I}}_k(X,d_X)$} if it admits
	an isoperimetric inequality of rank $\alpha=\frac{k+1}{k}$ for $\setI_k(X,d_X)$.
	
	\item
	$(X,d_X)$ satisfies an isoperimetric inequality of \emph{strict rank} $\alpha$ for $\setI_k(X,d_X)$ if  it admits an 
	isoperimetric inequality of rank $\alpha$ for $\setI_k(X,d_X)$ and does not admit an isoperimetric inequality of
	rank $\beta$ for $\setI_k(X,d_X)$ for any $\beta < \alpha$.
	
	\item
	$(X,d_X)$ satisfies a \emph{strictly Euclidean}  isoperimetric inequality for $\setI_k(X,d_X)$ if it satisfies an
	isoperimetric inequality of strict rank $\alpha=\frac{k+1}{k}$ for $\setI_k(X,d_X)$
	\end{enumerate}
\end{defi}

As one might expect, the name Euclidean isoperimetric inequality is motivated by the fact that the Euclidean space $(\RR^n,d_{\text{\footnotesize{Eucl}}})$ satisfies for all $1\le k <n$ isoperimetric inequalities of strict rank $\alpha=\frac{k+1}{k}$ for $\setI_k(\RR^n,d_{\text{\footnotesize{Eucl}}})$.


\section{Horizontal spheres and their fillings}

\begin{lem}\label{LemApprox}
Let $(G,d_R)$ be a stratified nilpotent Lie group and let $m\in \NN$, $m< \dim(G)$, be such that $(G,d_R)$  admits for all $k\in \{1,...,m\}$  an Euclidean isoperimetric inequality for $\setI_k(G,d_R)$.
Then there is a closed horizontal current $S \in \setI_m(G,d_R)$ with $\FillVol(S)>0$.
\end{lem}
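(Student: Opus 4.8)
The plan is to produce the horizontal current $S$ by a rescaling-and-compactness argument. Start with \emph{any} non-trivial closed integral $m$-current $T_0 \in \setI_m(G,d_R)$ that is a Lipschitz cycle — for instance the boundary of a small smooth $(m+1)$-ball, so that $\FillVol(T_0)>0$ by the local Euclidean structure of $(G,d_R)$. Now apply the dilations $s_t$ with $t\to\infty$ and renormalise: set $T_t \defeq \frac{1}{\Mass((s_t)_\# T_0)}\,(s_t)_\# T_0$, or better, rescale so that $\Mass(T_t)=1$ for all $t$. The point of the hypothesis — Euclidean isoperimetric inequalities in all dimensions $k\le m$ — is that it gives uniform control: each $T_t$ has mass $1$ and, because $s_t$ contracts the Carnot--Carathéodory metric by exactly $t$ while the Riemannian metric is contracted by at least $t$ on the horizontal layer and by $t^2,\dots,t^d$ on the higher layers, the pushforwards "flatten" onto the horizontal distribution. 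The Euclidean filling inequalities, combined with the deformation theorem / Wenger's compactness theorem for integral currents (valid since $(G,d_R)$ is a complete, and locally compact, metric space), yield a subsequence $T_{t_j}$ converging in the flat norm to some closed integral $m$-current $S$.

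The three things to check about the limit $S$ are: (1) $S\ne 0$; (2) $S$ is closed; (3) $S$ is horizontal, i.e. $S\in\setI_m(G,d_C)$. Closedness is automatic because $\partial$ is continuous under flat convergence and each $T_t$ is closed. For $S\ne 0$ one wants a lower mass bound that survives the limit: here is where the chain of Euclidean isoperimetric inequalities for $k=1,\dots,m$ is used inductively — if the masses could collapse, one could fill $T_{t_j}$ with vanishing volume, and iterating the filling/slicing through dimensions $m, m-1, \dots, 1$ would eventually contradict a non-vanishing lower-dimensional invariant (ultimately the fact that $T_0$ was a genuine non-trivial cycle, whose flat norm is bounded below after rescaling). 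Concretely, I expect the argument to show $\FillVol(T_{t_j})$ stays bounded below, which passes to $S$ by lower semicontinuity of filling volume under flat convergence, giving $\FillVol(S)>0$ (and in particular $S\ne 0$).

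The horizontality of $S$ is the heart of the matter and the main obstacle. The idea is that under $s_t$ with $t\to\infty$, the non-horizontal directions of the tangent planes of $T_0$ get "stretched" relative to the horizontal ones — more precisely, the mass contribution of any piece of $\opna{supp}(T_0)$ whose approximate tangent $m$-plane is \emph{not} contained in $V_1$ grows strictly faster in $d_R$-mass than the horizontal pieces, so after normalising total mass to $1$ the non-horizontal part must tend to $0$ in mass. Making this rigorous means analysing how the Jacobian of $s_t$ acts on an $m$-covector built from a mixed frame $v_{i_1}\wedge\cdots\wedge v_{i_m}$ with $v_{i_\ell}\in V_{j_\ell}$: the relevant scaling factor is $t^{j_1+\cdots+j_m}$, which is minimised precisely when all $j_\ell=1$, i.e. on horizontal planes. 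Thus in the limit $S$ is carried by an $\mc H^m$-rectifiable set with horizontal approximate tangent planes almost everywhere, which is exactly the condition $S\in\setI_m(G,d_C)$ (using that mass of a horizontal current is metric-independent, as recalled in the excerpt). The delicate points are: controlling the limit so that the normalisation does not kill \emph{everything} (which is why the Euclidean inequalities for all lower $k$ are needed, to keep a uniform handle on fillings as we rescale), and invoking the appropriate closure/compactness theorem for integral currents in this non-compact metric-space setting; both are where I would expect to spend most of the work.
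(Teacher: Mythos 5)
Your proposal takes a genuinely different route from the paper, and unfortunately the central mechanism does not work as stated. The scaling computation is backwards: under $(s_t)_\#$ with $t\to\infty$, an $m$-plane spanned by vectors in layers $V_{j_1},\dots,V_{j_m}$ picks up the Jacobian factor $t^{j_1+\cdots+j_m}$, which is \emph{minimised} (equal to $t^m$) exactly on horizontal planes. Hence the mass carried by non-horizontal tangent planes grows strictly \emph{faster} than the horizontal mass, and after normalising the total mass to $1$ it is the horizontal part that tends to $0$ --- the opposite of what you claim. Reversing the direction ($t\to 0$) does make the non-horizontal fraction vanish, but then the current collapses to a point and the renormalised limit is at best a tangent plane at a single point: an object without compact support, not a closed integral current with positive filling volume in the required sense, and not horizontal at a generic point of a generic cycle anyway. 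The correct way to exploit scaling in this context is Wenger's: one rescales the \emph{metric}, so that $(G,\tfrac{1}{t}d_R)$ converges to the asymptotic cone $(G,d_C)$, in which only horizontal currents have locally finite mass; this is how the hypothesis is actually used (it transfers the Euclidean isoperimetric inequalities from $\setI_k(G,d_R)$ to compactly supported horizontal currents in $(G,d_C)$), but it does not by itself hand you a closed horizontal $m$-current in $G$ with positive filling volume. Your appeal to an inductive ``filling/slicing through dimensions'' step to rule out mass collapse is also only a placeholder; no argument is given.

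The paper's proof is constructive rather than compactness-based. It fixes an explicit Lipschitz $m$-cycle $b$ supported on a sphere $S^m=\exp(S^{N-1}\cap\opna{span}\{X_1,\dots,X_{m+1}\})$ with $\FillVol(b_\#)>0$, takes $n$-fold barycentric subdivisions of a triangulation, and replaces the skeleta inductively by horizontal currents: vertices map to points, edges to horizontal paths of controlled $d_C$-length, and each higher simplex to a horizontal filling of its (already horizontal) boundary, using the Euclidean isoperimetric inequality for horizontal currents in $(G,d_C)$ together with Wenger's filling-radius estimates to control both mass and support. This produces closed horizontal cycles $S_n$ with $\opna{supp}(S_n)\to S^m$, and a second inductive construction produces fillings $T_n$ of $S_n-b_\#$ with $\Mass(T_n)\to 0$, whence $\FillVol(S_n)\ge\FillVol(b_\#)-\Mass(T_n)>0$ for large $n$. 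The positivity of the filling volume thus comes from flat-norm proximity to a \emph{fixed} cycle with known positive filling volume, not from a lower-semicontinuity argument along a renormalised blow-up sequence. To repair your approach you would essentially have to reproduce this construction (or Wenger's asymptotic-cone machinery) in full; as written, the proposal has a genuine gap at its key step.
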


\begin{proof}
We prove the existence of $S$ in two steps: first we construct for a given Lipschitz-$m$-cycle $b$, with $\FillVol(b_\#)>0$, a sequence $(S_n)\in \setI_m(G,d_R)^\NN$ of horizontal currents such that $\opna{supp}(S_n)$ converges in the Hausdorff sense to $\opna{supp}(b_\#)$. Then we show that the sequence converges to $b_\#$ also in the flat norm and therefore there is an $n_o\in \NN$ such that $\FillVol(S_n) > 0$ for all $n \ge n_o$. We set $S\defeq S_{n_o}$ to finally prove the lemma.

Let $N\defeq \dim(\mf g)$. We fix an orthonormal (with respect to the Riemannian metric) basis $\{X_1,...,X_N\}$ of $\mf g$ and denote by $S^{N-1}\defeq\{X \in \mf g \mid \|X\|=1\}$ the unit sphere in $\mf g$. Let $\widetilde{S^m}\defeq S^{N-1} \cap \opna{span}\{X_1,...,X_{m+1}\}$ and $S^m \defeq \exp(\widetilde{S^m})$. Here we used $m<  \dim(G) =N$. Let $(\tau,\phi)$ be a Lipschitz triangulation of $S^m$ with $m$-skeleton $\tau^{(m)}$. Then the sum $b\defeq \sum_{\Delta \in \tau^{(m)}} \phi_{|\Delta}$ of the restricted maps is a Lipschitz-$m$-cycle with $\FillVol(b_\#)>0$ and $\opna{supp}(b_\#)=S^m$. Further denote by $\tau_n$, $n \in \NN$, the simplicial complex which arises from $\tau$ after $n$-fold barycentric subdivision and denote by $b_n$ the corresponding Lipschitz-$m$-cycle $\sum_{\Delta \in \tau_n^{(m)}} \phi_{|\Delta}$. We consider the maximal diameter of the simplices $\phi(\Delta)$, $\Delta \in\tau_n$, with respect to the Riemannian metric and the Carnot-Carath\'eodory metric:
\begin{align*}
\delta_R(n)\defeq& \max_{\Delta \in \tau_n} \opna{diam}_R(\phi(\Delta))=\max_{\Delta \in \tau_n}\max\{d_R(x,y) \mid x,y \in \phi(\Delta)\}\\
\delta_C(n)\defeq& \max_{\Delta \in \tau_n} \opna{diam}_C(\phi(\Delta))=\max_{\Delta \in \tau_n}\max\{d_C(x,y) \mid x,y \in \phi(\Delta)\}
\end{align*}
As the maximal diameter $\opna{diam}_n$ of simplices of $\tau_n$ converges to $0$ for $n \to \infty$, we get the same for the Riemannian diameter:
$$\delta_R(n)\le \Lip(\phi) \opna{diam}_n \to  0 \qquad (n\to \infty)$$
And using the Hölder equivalence between $(G,d_R)$ and $(G,d_C)$ (via the identity map $id_G:G \to G$ and with constant $C>0$ and exponent $\varepsilon=d^{-1}=\frac{1}{\opna{step}(G)}$, compare \cite{Gromov}) we can see the same for the Carnot-Carath\'eodory diameter:
\begin{align*}
\delta_C(n)&=\max_{\Delta \in \tau_n}\max\{d_C(x,y) \mid x,y \in \phi(\Delta)\}\le \max_{\Delta \in \tau_n}\max\{C\cdot d_R(x,y)^\varepsilon \mid x,y \in \phi(\Delta)\}\\
&= C\cdot(\max_{\Delta \in \tau_n}\max\{d_R(x,y) \mid x,y \in \phi(\Delta)\})^\varepsilon=C\cdot\delta_R(n)^\varepsilon \to 0 \qquad (n \to \infty)
\end{align*}
We construct for each $n\in \NN$ a closed horizontal  $m$-current $S_n$ with $d_{\mc H,d_R}(\opna{supp}(S_n),S^m) \to 0$ for $n \to \infty$. To do this, we introduce for each $k$-simplex $\Delta \in \tau_n$ a horizontal $k$-current $H^n_\Delta$ such that it respects the simplicial structure:
$$\partial H^n_\Delta = \sum_{\partial \Delta= \sum_i (-1)^i \Delta_i} (-1)^i H^n_{\Delta_i}$$
We start in dimension zero and step by step go up to dimension $m$.
	\begin{enumerate}
	\item[Dimension 0:]
	For each vertex $v \in \tau_n^{(0)}$ we set $H^n_v\defeq (\phi_{|v})_\#$.
	\item[Dimension 1:]
	We can connect the images of the boundary vertices $v_0,v_1$ of an $1$-simplex $\Delta \in \tau_n^{(1)}$ by a
	horizontal path $\gamma_\Delta$ of length $\le \delta_C(n)$. We set $H^n_\Delta \defeq (\gamma_\Delta)_\#$.
	As $L(\gamma_\Delta) \le \delta_C(n)$, the support of $H^n_\Delta$ stays in distance $\frac{1}{2}\delta_C(n)$ of  
	$\{\phi(v_0),\phi(v_0) \} \subset S^m$. Notice, here we used $d_C \ge d_R$. Further we have 
	$\Mass(H^n_\Delta) =\opna{length}_R(\gamma_\Delta)\le \delta_C(n)$ for all $\Delta \in \tau_n^{(1)}$.
	\item[Dimension $2\le k \le m$:]
	In this step we need the assumption that $(G,d_R)$ admits for all 
	$k\in \{1,...,m\}$ an Euclidean isoperimetric inequality for $\setI_k(G,d_R)$. 
	By \cite{Wenger11} $(G,d_C)$ therefore admits an Euclidean isoperimetric inequality  for
	compactly supported horizontal $k$-currents.
	Further we use Wenger's results on the filling radius \cite{Wenger11asymp}, which allows us to find constants 
	$D,\mu \ge 1$ such that for every closed current $T\in \setI_{k-1}(G,d_C)$ with compact support there is a filling 
	$F_T\in \setI_k(G,d_C)$ with 
		\begin{enumerate}[1)]
		\item
		$d_{\mc H,d_C}(\opna{supp}(F_T) ,\opna{supp}(T)) \le  \mu \cdot 
		\Mass(T)^\alpha$
		\ with \ $\alpha= \begin{cases}\frac{1}{k-1} \text{ if } \Mass(T)\le 1, \\   {k-1} \text{ if } \Mass(T) > 1,
		\end{cases}$ 
		\item 
		$\Mass(F_T) \le D \cdot \Mass(T)^\frac{k}{k-1}$.
		\end{enumerate}
	For $\Delta \in \tau_n^{(k)}$ with $\partial \Delta= \sum_{i=0}^{k} (-1)^i \Delta_i$ we define $H^n_\Delta$ as
	the filling $F_T\in \setI_k(G,d_C)$ of the closed compactly supported horizontal $(k-1)$-current 
	$T=\sum_{i=0}^{k} (-1)^i
	H^n_{\Delta_i} \in \setI_{k-1}(G,d_C)$.
	\end{enumerate}
We denote by $M_n^{k}$ the maximal mass of $H^n_\Delta$ for $\Delta \in \tau_n^{(k)}$. By the above construction we have 
$$M_n^{k} \le D \cdot (k+1) \cdot M_n^{k-1} \le D^k \cdot (k+1)! \cdot M_n^1 \le  D^k \cdot (k+1)! \cdot \delta_C(n).$$
By this we get the following estimate how far the support of $H^n_\Delta$, $\Delta \in \tau_n^{(k)}$, can be away from $S^m$. Denote by $D_n^k \defeq \max\{d_C(x, S^m) \mid x \in \opna{supp}(H^n_\Delta), \Delta \in \tau_n^{(k)}\}$, then:
\begin{align*}
D_n^k  &\le \mu \cdot \max_{\Delta \in \tau_n^{(k)}}\Mass(\partial H^n_\Delta)^{\alpha} +\max\{d_C(x, S^m)\mid x \in \opna{supp}(H^n_\Delta), \Delta \in \tau_n^{(k-1)}\}\\
& \le \mu \cdot \left((k+1) \cdot M_n^{k-1}\right)^{\alpha_k}+ D_n^{k-1}
\end{align*}
where $\alpha_k=k-1$ if $M_n^{k-1} > \frac{1}{k+1} $ and $\alpha_k=\frac{1}{k-1}$ otherwise.
So we obtain 
\begin{align*}
D_n^m &\le \mu \cdot \left((m+1)\cdot M_n^{m-1}\right)^{\alpha_m} + D_n^{m-1} \le \mu \cdot \sum_{i=2}^{m}\left( (i+1) \cdot M_n^{i-1}\right)^{\alpha_{i}}\\
&\le \mu \cdot \sum_{i=2}^{m} \left((i+1) \cdot D^i\cdot (i+1)! \cdot \delta_C(n)\right)^{\alpha_{i}}
\end{align*}
We set 
$$\varepsilon(n) \defeq  \mu \cdot \sum_{i=2}^{m} \left((i+1) \cdot D^i\cdot (i+1)! \cdot \delta_C(n)\right)^{\alpha_{i}}$$
and
$$S_n \defeq \sum_{\Delta \in \tau_n^{(m)}} H^n_\Delta.$$
Then $d_{\mc H,d_C}(\opna{supp}(S_n),S^m) \le \varepsilon(n)$ because by the computation above $\opna{supp}(S_n)$ is contained in the $\varepsilon(n)$-neighbourhood of $S^m$ and $S^m$ is contained in the $\delta_C(n)$-neighbourhood of the $0$-skeleton $\phi(\tau_n^{(0)})\subset \opna{supp}(S_n)$ and $\delta_C(n)\le \varepsilon(n)$.
Again by the relationship $d_R \le d_C$, the same estimate holds for the Hausdorff distance with respect to the Riemannian metric: $d_{\mc H,d_R}(\opna{supp}(S_n),S^m)\le \varepsilon(n)$.\\
As $\delta_C(n) \to 0$ for $n \to \infty$, each summand of $\varepsilon(n)$ converges to $0$ and therefore $\varepsilon(n) \to 0$ for $n \to \infty$. So the sequence $(\opna{supp}(S_n))_{n\in \NN}$ converges for $n \to \infty$ in the Hausdorff sense to $S^m$.

To see that $\|S_n-b_\#\|_{\text{\footnotesize{flat}}} \to 0$ for $n \to \infty$, we construct a sequence of integral currents $T_n\in \setI_{m+1}(G,d_R)$ with $\partial T_n= S_n -b_\#$ and $\|T_n\|_{\text{\footnotesize{flat}}} \to 0$ for $n \to \infty$. For this we will use the combinatorial (i.e. simplicial) structure of $S_n$ and again we will work our way up from dimension $1$ to dimension $m$. For the construction we need a control on the mass of $\phi$-images of $k$-simplices of $\tau_n$. Let for $k\in \{1,...,m\}$
$$\eta_k(n)\defeq \max \{\Mass((\phi_{|\Delta})_\#) \mid \Delta \in \tau_n^{(k)}\}$$
be the maximal mass of an integral $k$-current induced by a $k$-simplex of $\tau_n$. As each $k$-simplex of $\tau_n$ arises from an $n$-fold barycentric subdivision of a $k$-simplex of $\tau$ we obtain:
$$\eta_k(n) \le \Lip(\phi)^k \cdot \left(\frac{1}{(k+1)!}\right)^n \cdot \frac{\sqrt{k+1}}{k!\sqrt{2^k}} \ \to \ 0 \qquad \text{for } n\to \infty.$$
Equipped with this we can start with the construction of $T_n$.
	\begin{enumerate}
	\item[Dimension $1$:]
	Let $e \in \tau_n^{(1)}$ be an edge and let $H_e^n \in \setI_1(G,d_R)$ be the corresponding current constructed
	above.
	Then $S^n_e\defeq(H_e^n-(\phi_{|e})_\#)$ is a closed integral current of mass $\Mass(S^n_e) \le \delta_C(n)
	+ \eta_1(n)$. By the results of \cite{Wenger11asymp}, now for $(G,d_R)$, and by the Euclidean isoperimetric
	inequalities up to dimension $m$, there are constants $D',\mu' \ge 1$ such 	that we can fill 
	$S^n_e$ by an integral current $F^n_e \in \setI_2(G,d_R)$ with $\Mass(F^n_e) \le  D' \cdot \eta_1(n)^2$ and
	with $d_{\mc H,d_R}(\opna{supp}(F^n_e),\opna{supp}(S^n_e)) \le \mu' \cdot \eta_1(n)$.
	\item[Dimension $2\le k \le m$:]
	Let $\Delta \in \tau_n^{(k)}$ be a $k$-simplex with $(k-1)$-dimensional faces $\Delta_0,...,\Delta_{k}$,
	i.e. $\partial \Delta = \sum_{j=0}^{k} (-1)^j \Delta_j$, and let $F^n_{\Delta_j}, \ j \in \{0,...,k\}$, be the
	integral currents constructed in the previous step for dimension $k-1$. Then the sum 
	$$S^n_\Delta \defeq H^n_\Delta - (\phi_{|\Delta})_\# - \sum_{j=0}^{k} F^n_{\Delta_j} $$
	forms a closed integral $k$-current with $\Mass(S^n_\Delta) \le M_n^k + \eta_{k}(n) + (k+1) \cdot N_n^{k-1}$
	where $N_n^{k-1} \defeq \max\{\Mass(F^n_\Delta) \mid \Delta \in \tau_n^{(k-1)}\}$. We can fill $S^n_\Delta$,
	using again \cite{Wenger11}, with an integral current $F^n_\Delta\in\setI_{k+1}(G,d_R)$ such that 
	$\Mass(F^n_\Delta)\le D' \cdot \Mass(S^n_\Delta)^\frac{k+1} {k}$ and with 
	$d_{\mc H,d_R}(\opna{supp}(F^n_\Delta), \opna{supp}(S^n_\Delta))\le \mu' \cdot \Mass(S^n_\Delta)^\alpha$
	where $\alpha=k$ if $\Mass(S^n_\Delta) > 1$ and $\alpha=\frac{1}{k}$ otherwise.
	\end{enumerate}
We set 
$$T_n \defeq \sum_{\Delta \in \tau_n^{(m)}} F^n_\Delta \quad \in \setI_{m+1}(G,d_R)$$
and as every $\Delta \in \tau_n^{(m-1)}$ is face of exactly two $m$-simplices of $\tau_n$, we obtain by construction $\partial T_n=S_n - b_\#$. We will show that $d_{\mc H,d_R}(\opna{supp}(T_n),S^m) \to 0$ for $n\to \infty$ and therefore $\|T_n\|_{\text{\footnotesize{flat}}}\to 0$ as $\dim_{\mc H,d_R}(S^m)=m < m+1 =\dim_{\mc H,d_R}(\opna{supp}(T_n))$ (compare \cite{SormaniWenger11}).
As $S^m=\opna{supp}(b_\#) \subset \opna{supp}(T_n)$, it only remains to compute the maximal distance of points $x \in \opna{supp}(T_n)$  from $S^m$. Denote by $\sigma(n,k)\defeq \max\{d_R(x, S^m) \mid x \in \opna{supp}(F_\Delta^n),\ \Delta \in \tau_n^{(k)}\}$ the maximal distance of points in the $k$-skeleton of $\opna{supp}(T_n)$ from $S^m$. We prove by recursion $\sigma(n,k) \to 0$ for $n\to \infty$ and all $k\in \{1,...m\}$.
\begin{align*}
\sigma(n,k) & \le \max_{\Delta \in \tau_n^{(k)}} \big(d_{\mc H,d_R}(\opna{supp}(F^n_\Delta), \opna{supp}(S^n_\Delta)) + \max\{d_R(y,S^m) \mid y \in \opna{supp}(S_\Delta^n)\}\big) \\
& \le \max_{\Delta \in \tau_n^{(k)}} \big(d_{\mc H,d_R}(\opna{supp}(F^n_\Delta), \opna{supp}(S^n_\Delta))\big) + \sigma(n,k-1) + d_{\mc H, d_R}(\opna{supp}(S_n), S^m) \\
&\le \mu' \cdot (M_n^k+ \eta_k(n) + (k+1) \cdot N_n^{k-1})^{\alpha_{n,k}} + \sigma(n,k-1)+\varepsilon(n)
\end{align*}
where we have $\alpha_{n,k}=k$ if $M_n^k+ \eta_k(n) + (k+1) \cdot N_n^{k-1}\ge 1$ and $\alpha_{n,k}=\frac{1}{k}$ elsewise.
We have already seen that $M_n^k \to 0$, $\eta_k(n) \to 0$ and $\varepsilon(n) \to 0$ for $n \to \infty$. For $N_n^k$ we have the following recursive estimate:
$$ N_n^k \le D' \cdot (M_n^k+ \eta_k(n)+ (k+1)\cdot N_n^{k-1})^{\frac{k+1}{k}} $$
Together with $N_n^1 \le D' \cdot \eta_1(n)^2$ this yields $N_n^k \to 0$ for $n \to \infty$, too. So we have for $k \in \{2,...,m\}$
$$\sigma(n,k) \to 0 \text{ for } n \to \infty \ \Longleftrightarrow \ \sigma(n,k-1) \to 0 \text{ for } n \to \infty.$$
As $\sigma(n,1) \le \mu' \cdot \eta_1(n) + \varepsilon(n) \to 0$ for $n \to \infty$ we have 
$$d_{\mc H,d_R}(\opna{supp}(T_n),S^m) = \sigma(n,m) \to 0 \quad \text{for } n \to \infty.$$
Finally we obtain 
$$\FillVol(S_n) \ge \FillVol(b_\#) - \|S_n - b_\#\|_{\text{\footnotesize{flat}}} \ge \FillVol(b_\#) -  \Mass(T_n) \to \FillVol(b_\#) > 0 \quad \text{for } n \to \infty.$$
So there is an $n_o \in \NN$ with $\FillVol(S_n) >0 $ for all $n \ge n_o$. Then $S\defeq S_{n_o}$ is a closed horizontal $m$-current with positive filling volume as desired.
\end{proof}


\begin{prop}\label{prop}
Let $(G,d_R)$ be a stratified nilpotent Lie group and let $m\in \NN$ be the lowest dimension in which $(G,d_R)$ does not satisfy a strictly Euclidean isoperimetric inequality for integral currents. Then $(G,d_R)$ does not admit an Euclidean isoperimetric inequality for $\setI_{m}(G,d_R)$.
\end{prop}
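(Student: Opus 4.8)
The plan is to argue by contradiction, combining Lemma~\ref{LemApprox} with the scaling properties of the dilations $s_t$. Assume, for contradiction, that $(G,d_R)$ \emph{does} admit an Euclidean isoperimetric inequality for $\setI_m(G,d_R)$. By minimality of $m$, $(G,d_R)$ satisfies — hence admits — a strictly Euclidean isoperimetric inequality for $\setI_k(G,d_R)$ for every $k<m$, so under our assumption $(G,d_R)$ admits an Euclidean isoperimetric inequality for $\setI_k(G,d_R)$ for all $k\in\{1,\dots,m\}$. First I would record that necessarily $m<\dim(G)$ (note $\dim(G)\ge 2$): were $m=\dim(G)$, Lemma~\ref{LemApprox} applied in dimension $\dim(G)-1$ would yield a nonzero closed horizontal current of dimension $\dim(G)-1$, which is impossible since $\dim(G)-1\ge\dim V_1$ and $V_1$ is non-abelian ($[V_1,V_1]=V_2\ne 0$), so $\setI_{\dim(G)-1}(G,d_C)=\{0\}$. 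With $m<\dim(G)$ in hand, Lemma~\ref{LemApprox} furnishes a closed horizontal current $S\in\setI_m(G,d_R)$ with $\FillVol(S)>0$; being horizontal, $S$ also lies in $\setI_m(G,d_C)$, and $\Mass(S)$ is the same whether computed with respect to $d_R$ or $d_C$.

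The core of the argument is how $S$ scales under the $s_t$. Since $\opna{d}_e s_t$ preserves the first layer $V_1$, each $s_t$ is a Lie group automorphism carrying horizontal curves to horizontal curves, and $d_C(s_tx,s_ty)=t\cdot d_C(x,y)$; hence $(s_t)_\#S$ is again horizontal (pushforwards of integral currents are integral and $s_t$ is a $d_C$-similarity of ratio $t$), and $\Mass\bigl((s_t)_\#S\bigr)=t^m\Mass(S)$ for all $t>0$ (again, this mass does not depend on which of the two metrics is used). For fillings I would exploit the one-sided Lipschitz bound for $d_R$: for $t\ge 1$ the map $s_{1/t}$ is $(1/t)$-Lipschitz with respect to $d_R$, so if $T'\in\setI_{m+1}(G,d_R)$ fills $(s_t)_\#S$ then $(s_{1/t})_\#T'$ fills $S=(s_{1/t})_\#(s_t)_\#S$ with $\Mass\bigl((s_{1/t})_\#T'\bigr)\le t^{-(m+1)}\Mass(T')$; taking the infimum over all such $T'$ gives $\FillVol\bigl((s_t)_\#S\bigr)\ge t^{m+1}\FillVol(S)$ for every $t\ge 1$.

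With these two estimates the contradiction is immediate. Suppose $(G,d_R)$ admitted an isoperimetric inequality of some rank $\beta<\tfrac{m+1}{m}$ for $\setI_m(G,d_R)$, with constant $C$. Choosing $t$ large enough that $\Mass\bigl((s_t)_\#S\bigr)=t^m\Mass(S)\ge 1$, the estimates above give
$$t^{m+1}\FillVol(S)\ \le\ \FillVol\bigl((s_t)_\#S\bigr)\ \le\ C\,\Mass\bigl((s_t)_\#S\bigr)^{\beta}\ =\ C\,\Mass(S)^{\beta}\,t^{m\beta},$$
so $t^{\,m+1-m\beta}\le C\,\Mass(S)^{\beta}/\FillVol(S)$ for all sufficiently large $t$; since $\FillVol(S)>0$ and $m+1-m\beta=m\bigl(\tfrac{m+1}{m}-\beta\bigr)>0$, the left side is unbounded, a contradiction. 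Hence $(G,d_R)$ admits no isoperimetric inequality of rank $\beta<\tfrac{m+1}{m}$ for $\setI_m(G,d_R)$; together with the Euclidean inequality assumed at the outset (rank $\tfrac{m+1}{m}$) this says that $(G,d_R)$ satisfies an isoperimetric inequality of \emph{strict} rank $\tfrac{m+1}{m}$ — that is, a strictly Euclidean isoperimetric inequality — for $\setI_m(G,d_R)$, contradicting the choice of $m$. Therefore the opening assumption is false and $(G,d_R)$ does not admit an Euclidean isoperimetric inequality for $\setI_m(G,d_R)$.

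The only genuinely delicate step I anticipate is the pair of scaling identities in the second paragraph, in particular that $(s_t)_\#$ multiplies the mass of a \emph{horizontal} integral current by exactly $t^m$: this is where Lemma~\ref{LemApprox} is indispensable, because for a non-horizontal current one can only pin the mass down to an exponent somewhere between $t^m$ and $t^{dm}$, far too weak to beat the factor $t^{m+1}$ coming from the filling. One must also be careful with the asymmetry of the $d_R$-estimates for $s_t$, which is why the filling bound is phrased only for $t\ge 1$. The remaining ingredients — that pushforward commutes with $\partial$, that pushforwards of integral currents are integral, and the final elementary inequality — are routine.
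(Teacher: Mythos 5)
Your proof is correct and follows essentially the same route as the paper: apply Lemma~\ref{LemApprox} to obtain a closed horizontal $m$-current $S$ with $\FillVol(S)>0$, then play the exact $t^m$ mass scaling of horizontal currents under $(s_t)_\#$ against the $t^{m+1}$ lower bound on filling volumes coming from the $d_R$-Lipschitz property of $s_{1/t}$, to rule out any rank $\beta<\tfrac{m+1}{m}$ and hence, by the definition of $m$, the Euclidean rank as well. Your explicit verification that $m<\dim(G)$ before invoking Lemma~\ref{LemApprox} is a point the paper only addresses implicitly (in the proof of the main theorem), so that extra care is welcome but does not change the argument.
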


\begin{proof}
As $m$ is the lowest dimension in which $(G,d_R)$ does not satisfy a strictly Euclidean isoperimetric inequality for integral currents, we have
in particular that $(G,d_R)$ admits for all $k\in\{1,...,m-1\}$ an Euclidean isoperimetric inequality for $\setI_k(G,d_R)$. 
Assume $(G,d_R)$ admits an isoperimetric inequality of rank $\alpha < \frac{m+1}{m}$ for $\setI_m(G,d_R)$ with constant $C>0$. This implies $(G,d_R)$ admits an Euclidean isoperimetric inequality for $\setI_m(G,d_R)$, too. By Lemma \ref{LemApprox} there is a closed horizontal current $S\in \setI_m(G,d_R)$ with $\FillVol(S)=V>0$. Denote by $\ell\defeq \Mass(S)$ the mass of $S$. For every $t>\frac{1}{\sqrt[m]{\ell}}$ define the closed horizontal $m$-current $S_t\defeq (s_t)_\#(S)$ and let $T_t$ be an integral $(m+1)$-current with $\partial T_t=S_t$ and $\Mass(T_t) \le C \cdot \Mass(S_t)^\alpha=C \cdot (t^m\cdot \ell)^\alpha$. Then $\widetilde{T_t}\defeq(s_{t^{-1}})_\#(T_t)$ is a filling of $S$ with 
$$\Mass(\widetilde{T_t}) \le t^{-(m+1)}\cdot \Mass(T_t) \le t^{-(m+1)} \cdot C \cdot (t^m\cdot \ell)^\alpha=t^{m\alpha-(m+1)} \cdot C \cdot \ell^\alpha.$$
As we have $\alpha < \frac{m+1}{m}$, the exponent $\epsilon=m\alpha-(m+1)$ is negative. Therefore, with $t> \max\{\left( \frac{V}{C \cdot \ell^\alpha}\right)^{\frac{1}{\epsilon}},\frac{1}{\sqrt[m]{\ell}} \}$, we obtain a filling $\widetilde{T_t}$ of $S$ with $\Mass(\widetilde{T_t})<V=\FillVol(S)$. But this is a contradiction! So $(G,d_R)$ does not satisfy an isoperimetric inequality of rank $\alpha$ for $\setI_m(G,d_R)$ for any $\alpha < \frac{m+1}{m}$.
As $(G,d_R)$ does not satisfy a strictly Euclidean isoperimetric inequality for $\setI_m(G,d_R)$ this 
yields $(G,d_R)$ cannot admit an Euclidean isoperimetric inequality for $\setI_m(G,d_R)$.
\end{proof}

\begin{rem}
The proof  shows in addition, that the dimension $k_o$ of the first non-Euclidean isoperimetric inequality in Theorem \ref{Thm} can be characterised as the smallest dimension $k$ such that there is a $k$-dimensional horizontal boundary without a horizontal filling. 
\end{rem}

\bibliography{bib}

\def\cprime{$'$} \def\cprime{$'$}
\begin{thebibliography}{Wen11b}

\bibitem[AK00]{AmbrosioKirchheim}
Luigi Ambrosio and Bernd Kirchheim.
\newblock Currents in metric spaces.
\newblock {\em Acta Math.}, 185(1):1--80, 2000.

\bibitem[Bur96]{Burillo}
Jos{\'e} Burillo.
\newblock Lower bounds of isoperimetric functions for nilpotent groups.
\newblock In {\em Geometric and computational perspectives on infinite groups
  ({M}inneapolis, {MN} and {N}ew {B}runswick, {NJ}, 1994)}, volume~25 of {\em
  DIMACS Ser. Discrete Math. Theoret. Comput. Sci.}, pages 1--8. Amer. Math.
  Soc., Providence, RI, 1996.

\bibitem[FF60]{FF60}
Herbert Federer and Wendell~H. Fleming.
\newblock Normal and integral currents.
\newblock {\em Ann. of Math. (2)}, 72:458--520, 1960.

\bibitem[Gro96]{Gromov}
Mikhael Gromov.
\newblock Carnot-{C}arath\'eodory spaces seen from within.
\newblock In {\em Sub-{R}iemannian geometry}, volume 144 of {\em Progr. Math.},
  pages 79--323. Birkh\"auser, Basel, 1996.

\bibitem[Gru17a]{Gruber1}
Moritz Gruber.
\newblock {F}illing {I}nvariants of {S}tratified {N}ilpotent {L}ie {G}roups.
\newblock {\em arXiv:1507.04871v3, \textup{to appear in} Mathematische
  Zeitschrift}, 2017.

\bibitem[Gru17b]{Gruber2}
Moritz Gruber.
\newblock The growth of the first non-{E}uclidean filling function of the
  quaternionic {H}eisenberg {G}roup.
\newblock {\em arXiv:1702.00954v2, \textup{to appear in} Advances in Geometry},
  2017.

\bibitem[Mag04]{Magnani}
Valentino Magnani.
\newblock Unrectifiability and rigidity in stratified groups.
\newblock {\em Arch. Math. (Basel)}, 83(6):568--576, 2004.

\bibitem[NR93]{GGT}
Graham~A. Niblo and Martin~A. Roller, editors.
\newblock {\em Geometric group theory. {V}ol. 2}, volume 182 of {\em London
  Mathematical Society Lecture Note Series}. Cambridge University Press,
  Cambridge, 1993.

\bibitem[Pit95]{Pittet95}
Christophe Pittet.
\newblock Isoperimetric inequalities for homogeneous nilpotent groups.
\newblock In {\em Geometric group theory ({C}olumbus, {OH}, 1992)}, volume~3 of
  {\em Ohio State Univ. Math. Res. Inst. Publ.}, pages 159--164. de Gruyter,
  Berlin, 1995.

\bibitem[Pit97]{Pittet}
Christophe Pittet.
\newblock Isoperimetric inequalities in nilpotent groups.
\newblock {\em J. London Math. Soc. (2)}, 55(3):588--600, 1997.

\bibitem[SW11]{SormaniWenger11}
Christina Sormani and Stefan Wenger.
\newblock The intrinsic flat distance between {R}iemannian manifolds and other
  integral current spaces.
\newblock {\em J. Differential Geom.}, 87(1):117--199, 2011.

\bibitem[Wen11a]{Wenger11asymp}
Stefan Wenger.
\newblock The asymptotic rank of metric spaces.
\newblock {\em Comment. Math. Helv.}, 86(2):247--275, 2011.

\bibitem[Wen11b]{Wenger11}
Stefan Wenger.
\newblock Nilpotent groups without exactly polynomial {D}ehn function.
\newblock {\em J. Topol.}, 4(1):141--160, 2011.

\bibitem[You13]{Young1}
Robert Young.
\newblock Filling inequalities for nilpotent groups through approximations.
\newblock {\em Groups Geom. Dyn.}, 7(4):977--1011, 2013.

\bibitem[You16]{YoungII}
Robert Young.
\newblock High-dimensional fillings in {H}eisenberg groups.
\newblock {\em J. Geom. Anal.}, 26(2):1596--1616, 2016.

\end{thebibliography}
\bibliographystyle{alpha}

\quad\\
\textsc{Courant Institute of Mathematical Sciences, New York University, New York, USA}\\
\textit{E-mail address:} gruber.moritz@outlook.de

%
%

\end{document}